\newtheorem{teo}{Theorem}[section]
\newcommand{\E}{{\mathbf E}}
\newcommand{\N}{{\mathbb N}}
\newcommand{\sgn}{\operatorname{sgn}}
\def\1{{\rm l}\hskip -0.21truecm 1}
\begin{document}
\title{Strong limit of processes constructed from a renewal process}
\date{}
\author{Xavier Bardina
	and Carles Rovira\footnote{ X. Bardina and C. Rovira are supported by the grant PID2021-123733NB-I00 from SEIDI,  Ministerio de Economia y Competividad.}}

\maketitle

\begin{abstract}
We construct a family of processes, from a renewal process, that have realizations that converge almost surely to the  Brownian motion, uniformly on the unit time interval. Finally we compute the rate of convergence in a particular case.

\end{abstract}

{\bf MSC(2010):} 60F17, 60G15

{\bf Keywords:} strong convergence, renewal process, Brownian motion

\section{Introduction}

In this paper we study
realizations of processes that converge almost surely,  uniformly on the unit time interval, to the standard Brownian motion. In the mathematical literature we can find papers studying  the strong convergence of
random walks or the process usually called as uniform
transport processes. Our aim is to deal with extensions of the uniform transport process.

The uniform transport process, introduced by Kac in \cite{K}, can be written as

\begin{equation*}
y_n(t)=\frac{1}{n}(-1)^{A}\int_0^{n^2 t}(-1)^{N(u)}du,\end{equation*}
where $N=\{N(t),\,t\geq0\}$ is a standard Poisson process and $A\sim
\textrm{Bernoulli}\left(\frac12\right)$  independent of the Poisson
process $N$.

Griego, Heath and Ruiz-Moncayo  \cite{art G-H-RM}  showed that these
processes converge strongly and uniformly on bounded time intervals
to Brownian motion. Gorostiza and Griego
\cite{art G-G} and Cs\"{o}rg\H{o} and Horv\'ath \cite{CH} obtained a
rate of convergence. More precisely, in \cite{art G-G} it is proved
that there exist versions of the transport processes $\tilde
y_{n}$ on the same probability space as a given Brownian
motion $(y(t))_{t\geq0}$ such that, for each $q > 0$,
$$P\left(\sup_{a\leq t\leq b}|y_n(t)- y(t)|> {C} n^{\frac12}\left(\log
n\right)^{\frac52}
\right)=o\left({{n}^{-q}}\right),$$ as $n \to \infty$
and where $C$ is a positive constant depending on $a$, $b$ and $q$.
These bounds are improved in \cite{NP1} using an explicit computation in the Skorohod embedding problem.
Furthermore, we can find several papers (see for instance \cite{art G-G2}, \cite{B-B-R}, \cite{GGL}, \cite{GGL2}, \cite{GGL3},
\cite{GTT})  where the authors defined a sequence of
processes, obtained as modifications of the uniform transport process, that converges strongly to some Gaussian processes
uniformly on bounded intervals.

Nevertheless, all these papers are based on processes built from a Poisson process. Let us recall that Poisson process has  jump times with exponential laws and so, we are able to use all the particular properties of this distribution.

Our aim is to deal with jump times that don't have exponential law. 
We
 consider extensions of the uniform transport using a reward renewal process instead of a Poisson process. As far as the authors know, these type of processes has not  been studied. 
 
 We  consider 
\begin{equation*}
x_n(t)=h(n)\int_0^{g(n) t}(-1)^{T(u)}du,\end{equation*}
where $T=\{T(t),\,t\geq0\}$ is a renewal reward process and $h$ and $g$ are nonnegative functions defined on $\N$.

We will show that for a wide class of renewal reward processes we have, when $n$ goes to $\infty$, the strong convergence of these processes to a standard Brownian motion. We also deal with the rate of convergence. Unfortunately we are not able to get a general result since the proofs heavily rely on the specific distribution of the jump times. We will compute the rate of convergence when the jump times have uniform distribution, showing that the method used in \cite{art G-G} can be adapted for non exponential times. All these results give us new ways to simulate the behaviour of the standard Brownian motion.

The paper is organized in the following way. Section 2 is devoted to define the processes  and to give the main results.  In Section 3  we prove the strong convergence theorem. The study of the rate of convergence is  given in Section 4.

\section{Definitions and main result}

Consider $(U_m)_{m\ge1}$ be a sequence of independent random variables which take on only nonnegative values. We also assume that they are identically distributed with $P(U_1 =0) < 1$ and $E( (U_1)^4) < \infty.$

For each $k \ge 1$ consider the renewal sequence
$S_k=U_1+\cdots+U_k$
and the counting renewal function
$$L(t)=\sum_{k=1}^\infty \1_{\,[0,t]} (S_k),$$
that is the counting function of the number of renewals in $[0,t].$

Set $\{ \eta_m \}_{m \ge 0}$ a sequence of independent identically distributed random variables with law Bernoulli($\frac12$), independent of   $\{U_m\}_{m \ge 1}$.
Then, we will deal with the renewal reward process defined as
$$T(t)=\eta_0 +\sum_{k=1}^\infty \eta_k \, \,\1_{\,[0,t]} (S_k) = \sum_{l=0}^{L(t)} \eta_l.$$

\bigskip

Then, given a strictly positive function $\beta$ 
we define
\begin{equation}  \label{tn} T_n(t)=T_{\beta(n)}(t)= T \big(\frac{t}{\beta(n)}\big) = \eta_0 + \sum_{k=1}^\infty \eta_k \, \,\1_{\,[0,\frac{t}{\beta(n)}]} (S_k) = \eta_0 + \sum_{k=1}^\infty \eta_k \, \,\1_{\,[0,t]} (\beta(n) S_k).\end{equation}  
Notice that putting $U_m^n = \beta(n) \times U_m$ for all $m \ge 1$, we have that
$$\beta(n) \times S_k= U_1^n+\cdots+U_k^n.$$

Our aim is to study the convergence of the processes
\begin{equation}  \label{xn}
x_n(t)=\Big( \beta(n) \frac{\E((U_1)^2)}{\E(U_1)} \Big)^{-\frac12} \int_0^t (-1)^{T_{\beta(n)}(u)} du= \frac{1}{G(n)} \int_0^t (-1)^{T_{\beta(n)}(u)} du,\end{equation}
where
$$G(n)=\Big( \beta(n) \frac{\E((U_1)^2)}{\E(U_1)} \Big)^\frac12,$$
with
$$\sum_{n \ge 1} \beta(n) < \infty.$$
Obviously, we can write
\begin{eqnarray*}
	&&x_n(t)=\frac{1}{G(n)} \int_0^t (-1)^{T (\frac{u}{\beta(n)})} du = \frac{1}{G(n)} \beta(n) \int_0^\frac{t}{\beta(n)} (-1)^{T (v)} dv \\ && \qquad = \Big(  \frac{\E(U_1)}{\E((U_1)^2)} \Big)^\frac12 \beta(n)^\frac12 \int_0^\frac{t}{\beta(n)} (-1)^{T (v)} dv.
\end{eqnarray*}

\bigskip

\bigskip

Our next result gives the strong convergence of realizations of our processes $\{x_n(t);\,t\in[0,1]\}$
and states as follows:

\begin{teo}\label{resultat}
There exists realizations of the process $x_n$
on the same probability space as a standard Brownian motion
$\{x(t),t\geq0\}$ such that
  $$ \lim_{n\rightarrow \infty} \max_{0\leq t\leq1} |x_n(t)-x(t)|=0 \quad a.s. $$
\end{teo}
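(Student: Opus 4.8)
\emph{Step 1: reduction to a random walk.} Set $\epsilon_k:=(-1)^{\eta_0+\cdots+\eta_k}$. On $[S_k,S_{k+1})$ one has $L(v)=k$, hence $T(v)=\eta_0+\cdots+\eta_k$ and $(-1)^{T(v)}=\epsilon_k$; since the $\eta_m$ are i.i.d. Bernoulli$(\tfrac12)$ and independent of the $U_m$, the sequence $(\epsilon_k)_{k\ge0}$ is i.i.d., uniform on $\{-1,+1\}$, and independent of $(U_m)$. Writing $\xi_j:=\epsilon_jU_{j+1}$, $R_k:=\xi_0+\cdots+\xi_{k-1}$ and $\phi(s):=\int_0^s(-1)^{T(v)}\,dv$, we get $\phi(S_k)=R_k$ and, for $s\in[S_k,S_{k+1})$, $\phi(s)=R_k+\epsilon_k(s-S_k)$; thus $\phi$ is the piecewise linear interpolation of the walk $(R_k)$ at the renewal times $S_k=U_1+\cdots+U_k=|\xi_0|+\cdots+|\xi_{k-1}|$. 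The $\xi_j$ are i.i.d., symmetric, with $\E(\xi_1)=0$, $\E(\xi_1^2)=\E(U_1^2)$, $\E(|\xi_1|)=\E(U_1)>0$ and $\E(\xi_1^4)=\E(U_1^4)<\infty$. A change of variables then gives $x_n(t)=\frac{\beta(n)}{G(n)}\,\phi\big(t/\beta(n)\big)$.

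\emph{Step 2: the coupling.} Fix a standard Brownian motion $x=\{x(t),t\ge0\}$ on a probability space carrying, in addition, an i.i.d. array of auxiliary variables used below. Put $H_n:=1/\beta(n)$, so that $\sum_nH_n^{-1}=\sum_n\beta(n)<\infty$, and let $\widehat W_n(s):=\beta(n)^{-1/2}x(\beta(n)s)$, a standard Brownian motion with $x(t)=\beta(n)^{1/2}\widehat W_n(t/\beta(n))$. Because $\xi_1$ is symmetric the Skorohod embedding inside $\widehat W_n$ is explicit: let $c:=(\E(U_1^2)/\E(U_1))^{1/2}$, draw i.i.d. copies $V^n_j$ of $|\xi_1|/c$, and let $\rho^n_{j+1}-\rho^n_j$ be the first exit time of $\widehat W_n(\rho^n_j+\cdot)-\widehat W_n(\rho^n_j)$ from $[-V^n_j,V^n_j]$; then $\widehat R^n_k:=c\,\widehat W_n(\rho^n_k)$ has the law of $(R_k)$, with $|\widehat R^n_{j+1}-\widehat R^n_j|=cV^n_j$, $\E[\rho^n_{j+1}-\rho^n_j]=\E(U_1)$ and (using $\E(U_1^4)<\infty$) $\E[(\rho^n_{j+1}-\rho^n_j)^2]<\infty$. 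Reconstructing from $(\widehat R^n_k)$ the renewal times $\widehat S^n_k:=\sum_{j<k}|\widehat R^n_{j+1}-\widehat R^n_j|$, their piecewise linear interpolant $\widehat\phi_n$, and $\widetilde x_n(t):=\frac{\beta(n)}{G(n)}\,\widehat\phi_n(t/\beta(n))$, one gets a copy of $x_n$ realised as a functional of $x$. Since $\beta(n)^{1/2}/G(n)=1/c$,
\[
\max_{0\le t\le1}\bigl|\widetilde x_n(t)-x(t)\bigr|=\frac{\beta(n)^{1/2}}{c}\,\sup_{0\le s\le H_n}\bigl|\widehat\phi_n(s)-c\,\widehat W_n(s)\bigr|.
\]

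\emph{Step 3: the estimates.} With $\widehat L_n(s):=\max\{k:\widehat S^n_k\le s\}$ and the centred walk $M^n_k:=\rho^n_k-\widehat S^n_k=\sum_{j<k}\big((\rho^n_{j+1}-\rho^n_j)-|\widehat R^n_{j+1}-\widehat R^n_j|\big)$, we have
\[
\bigl|\widehat\phi_n(s)-c\widehat W_n(s)\bigr|\le\bigl|\widehat R^n_{\widehat L_n(s)}-\widehat\phi_n(s)\bigr|+c\,\bigl|\widehat W_n(\rho^n_{\widehat L_n(s)})-\widehat W_n(s)\bigr|,
\]
where the first term is at most the current cell width $|\widehat R^n_{\widehat L_n(s)+1}-\widehat R^n_{\widehat L_n(s)}|$ and, since $|\rho^n_{\widehat L_n(s)}-s|\le|M^n_{\widehat L_n(s)}|+|\widehat R^n_{\widehat L_n(s)+1}-\widehat R^n_{\widehat L_n(s)}|$, the second is controlled by the modulus of continuity of $\widehat W_n$ at that scale. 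It then suffices, for each fixed $\varepsilon>0$, to dominate by thresholds whose exceedance probabilities are summable in $n$: (a) $\max_{k\le 2H_n/\E(U_1)}|\widehat R^n_{k+1}-\widehat R^n_k|$ (union bound and $\E(U_1^4)<\infty$, giving $O(\beta(n))$ once the threshold is a suitable multiple of $H_n^{1/2}$); (b) $\widehat L_n(H_n)$ by $2H_n/\E(U_1)$ (law of large numbers for $\widehat S^n_k$, again $O(\beta(n))$); (c) $\max_{k\le 2H_n/\E(U_1)}|M^n_k|$ by a suitable threshold (maximal inequality for $(M^n_k)$); and (d) the oscillation of $\widehat W_n$ over intervals of the resulting length, by the Gaussian tail bound for the Brownian modulus of continuity (which decays faster than any power of $H_n$). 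Choosing the thresholds compatibly — this is where $\sum_n\beta(n)<\infty$ enters — yields $\sum_nP\big(\max_{0\le t\le1}|\widetilde x_n(t)-x(t)|>\varepsilon\big)<\infty$, and Borel--Cantelli, together with letting $\varepsilon$ decrease to $0$ along the rationals, finishes the proof.

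\emph{Main obstacle.} The delicate point is the interaction of (c) and (d): $\E(U_1^4)<\infty$ confers on the increments of $(M^n_k)$ only a finite \emph{second} moment, so the available maximal bound is of Kolmogorov type, and one must check that the threshold it forces for $\max_k|M^n_k|$ is still small enough relative to $H_n^{1/2}$ for the Brownian oscillation in (d) to remain within the budget $\varepsilon c\,H_n^{1/2}$, while keeping every exceedance probability summable against $\sum_n\beta(n)$. Reconciling these opposing demands — the cell sizes and the Brownian modulus pushing the thresholds up, the weak moment control of the time--discrepancy walk pushing them down — is the technical core of the argument.
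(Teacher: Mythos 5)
Your construction is essentially the paper's: a Skorohod embedding of the reward-weighted renewal walk into the Brownian motion (made explicit via first-exit times from $[-V,V]$, which is legitimate here since the increments $\epsilon_jU_{j+1}$ are symmetric), followed by Chebyshev/Kolmogorov maximal bounds and Borel--Cantelli using $\sum_n\beta(n)<\infty$. Steps 1 and 2 are correct, and your items (a)--(c) are exactly the paper's two estimates $\max_m|\Lambda^n_m-mH(n)|\to0$ and $\max_m|\Gamma^n_m-mH(n)|\to0$ transported to the diffusive time scale.

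The one genuine problem is the step you yourself flag as the ``main obstacle,'' and as planned it does not close. In rescaled time Kolmogorov's inequality lets you take the threshold for $\max_k|M^n_k|$ of order $\varepsilon H_n$ (then the exceedance probability is $O(\beta(n)/\varepsilon^2)$, summable), but with a time discrepancy of order $H_n$ the Gaussian tail bound for the oscillation of $\widehat W_n$ at that scale over $[0,CH_n]$ against the budget $\varepsilon cH_n^{1/2}$ gives only an $O(1)$ probability; and if you shrink the threshold to $H_n/\log$-type so that the Gaussian tails become summable, Kolmogorov then requires $\sum_n\beta(n)(\log(1/\beta(n)))^2<\infty$, which does not follow from $\sum_n\beta(n)<\infty$ (take $\beta(n)=1/(n\log^2n)$). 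The resolution --- and this is what the paper does --- is to not treat item (d) probabilistically at all: undo the rescaling, so that all the relevant time points $\Lambda^n_m$, $\Gamma^n_m$, $mH(n)$ lie in a fixed neighbourhood of $[0,1]$, conclude from (a)--(c) that the time discrepancies tend to $0$ almost surely, and then invoke the almost sure uniform continuity of the single fixed Brownian path on that compact interval. No quantitative modulus and no union bound over $n$ is needed for the oscillation term, and the opposing demands you describe simply disappear. With that replacement your argument is complete and coincides with the paper's proof; the quantitative version of (d) only becomes relevant for the rate-of-convergence result, where the paper indeed pays for it with logarithmic factors.
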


\begin{proof} See Section \ref{cap_realp}. \end{proof}

\bigskip

Observe that we are assuming that the jumps occurs with times that follows a family of  nonnegative independent identically distributed random variables $\{U_m^n\}_{m \ge 1}$.

\bigskip

\section{Proof of strong convergence }\label{cap_realp}

In this section, we will prove the strong convergence when
$n$ tends to $\infty$ of the processes
$\{x_n(t);\,t\in[0,1]\}$ defined in Section 2.

\bigskip

{\it Proof of Theorem \ref{resultat}}.  We will follow the methodology used in \cite{art G-H-RM}.

\bigskip

 Let $(\Omega,\cal{F},\cal{P})$ be the probability space for a standard Brownian motion $\{x_t,t\geq0\}$ with $x(0)=0$ and let us define:

\begin{enumerate}

\item   for each $n>0$,  $\{\xi_m^n\}_{m \ge 1}$  a sequence of nonnegative independent identically distributed random variables, independent of the Brownian motion $x$, such that
\begin{equation}\label{defu}
G(n) \times \xi_m^n \sim U_m^n.
\end{equation}

\item $\{ k_m \}_{m \ge 1}$ a sequence of independent identically distributed random variables such that
$P(k_1=1)=P(k_1=-1)=\frac12$ , independent of  $x$ and $\{\xi_m^n\}_{m \ge 1}$ for all $n$.

\end{enumerate}

Notice that
$$ \xi_m^n \sim \frac{\beta(n)}{G(n)} \times U_m = \beta(n)^\frac12 \frac{\E(U_1)^\frac12}{E((U_1)^2)^\frac12} \times U_m.$$
So
$$\E(\xi_m^n)=\beta(n)^\frac12 \frac{\E(U_1)^\frac32}{E((U_1)^2)^\frac12}, \qquad
\E((\xi_m^n)^2)=\beta(n) \E(U_1)$$
and
$$\E((\xi_m^n)^4)=\beta(n)^2 \frac{\E((U_1)^4) \E(U_1)^2}{\E((U_1)^2)^2}$$

\bigskip
By Skorokhod's theorem (\cite{sko}  page 163) for each $n \ge 1$ there exists a sequence $\sigma_1^{n},\sigma_2^{n},...$ of nonnegative independent random variables on $(\Omega,\cal{F},\cal{P})$ so that the sequence $x(\sigma_1^{n}), x(\sigma_1^{n}+\sigma_2^{n}),...,$ has the same distribution as $k_1\xi_1^{n},k_1\xi_1^{n}+k_2\xi_2^{n},...,$ and, for each $m$,
\begin{enumerate}
 \item   $$ \E(\sigma_m^{n})=Var(k_m\xi_m^{n})= \E((\xi_m^n)^2)=\beta(n) \E(U_1),$$
 \item   There exists $L_2$ such that
 $$ Var(\sigma_m^n) \le\E((\sigma_m^{n})^2 )\le L_2 \E((\xi_m^n)^{4}) = L_2 \beta(n)^2 \frac{\E((U_1)^4) \E(U_1)^2}{\E((U_1)^2)^2}.$$
\end{enumerate}

For each $n$  we define $\gamma_0^{n}\equiv0$ and for each $m$
    $$ \gamma_m^{n}=G(n) \left| x\left(\sum_{j=0}^m\sigma_j^{n}\right)-x\left(\sum_{j=0}^{m-1}\sigma_j^{n}\right) \right|, $$
where $\sigma_0^{n}\equiv0$. 

Then, from (\ref{defu}) it follows that the random variables $\gamma_1^{n},\gamma_2^{n},...,$ are independent with the same distribution that $U_1^n,U_2^n,\ldots,$ and 
$$E(\gamma_m^n)
 = E(U_m^n)= \beta(n) E(U_1)$$
and
$$Var(\gamma_m^n)=\beta(n)^2 Var(U_1).$$

\bigskip

Now, we define $x_n(t), t \ge 0$  to be piecewise linear satisfying
\begin{equation} x_n\left(\sum_{j=1}^{m}\gamma_j^{n}\right)=x\left(\sum_{j=1}^{m}\sigma_j^{n}\right), \qquad  m \ge 1 \label{defreal}
\end{equation}
  and $x_n(0)\equiv0$. Observe that the process $x_n$ has slope $\pm|G(n)|^{-1}$ in the interval $[\sum_{j=1}^{m-1}\gamma_j^{n},\sum_{j=1}^{m}\gamma_j^{n}]$.

\bigskip

{On the other hand, let $\Gamma_m^n=\sum_{j=1}^{m}\gamma_j^{n}$. We get that the increments
$\Gamma_m^n-\Gamma_{m-1}^n$, for each $m$, with
$\Gamma_0^n\equiv0$, are independent and have law $G(n) \times \xi_1^m \sim U_1^n$. Moreover the probability that 
 $x\left(\sum_{j=0}^m\sigma_j^{n}\right)-x\left(\sum_{j=0}^{m-1}\sigma_j^{n}\right)$ is positive is $\frac12$, independent of the past up to time  $\sum_{j=0}^{m-1}\sigma_j^{n}.$
Thus  $x_n$ is a realization of the process (\ref{xn}).

\bigskip

 Set  $H(n):=\beta(n) E(U_1)$. Recalling that $\gamma_0^{n}\equiv\sigma_0^{n}\equiv0$, by (\ref{defreal}) and the uniform continuity of Brownian motion on $[0,1]$, we have almost surely
\begin{eqnarray*}
  \lim_{n \rightarrow \infty} \,\,\max_{0\leq t\leq1} \left| x_n(t)-x(t) \right| &=& \lim_{n \rightarrow \infty} \,\,\max_{0\leq m\leq \frac{ 1}{H(n)}} \left| x_n\left(\sum_{j=0}^m\gamma_j^{n}\right)-x\left(\sum_{j=0}^m\gamma_j^{n}\right) \right| \\
  &=& \lim_{n \rightarrow \infty} \,\,\max_{0\leq m  \le \frac{ 1}{H(n)} } \left| x\left(\sum_{j=0}^m\sigma_j^{n}\right)-x\left(\sum_{j=0}^m\gamma_j^{n}\right) \right|,
\end{eqnarray*}
and it reduces the proof to check that,
    \begin{equation}\nonumber
      \lim_{n \rightarrow \infty} \,\max_{1\leq m\leq \frac{ 1}{H(n)}} \left| \gamma_1^{n}+\dots+\gamma_m^{n}-m H(n) \right|=0 \quad a.s.,
    \end{equation}
and that
    \begin{equation}\nonumber
   \lim_{n \rightarrow \infty} \,\max_{1\leq m\leq \frac{ 1}{H(n)}}\left| \sigma_1^{n}+\dots+\sigma_m^{n}-m H(n) \right|=0 \quad a.s.,
   \end{equation}

\bigskip
The first limit can be obtained easily   by Borel-Cantelli lemma since  by Kolmogorov's inequality, for each $\alpha>0$, we have
    \begin{eqnarray*}
     & &P\left(\max_{1\leq m\leq \frac{ 1}{H(n)}} \left| \gamma_1^{n}+\dots+\gamma_m^{n}-mH(n) \right|\geq\alpha\right) \leq\frac{1}{\alpha^2}\sum_{m=1}^{[\frac{ 1}{H(n)}]}  Var(\gamma_k^n)  \\
      && \qquad \leq\frac{1}{\alpha^2}\sum_{m=1}^{\infty} \beta(n)^2 Var(U_1) < \infty.
    \end{eqnarray*}

We can study the second limit repeting the same arguments as before. Using the bouns obtained from Skorohod's theorem,
for each $\alpha>0$, we have
\begin{eqnarray*}
	P\left(\max_{1\leq m\leq \frac{ 1}{H(n)}} \left| \sigma_1^{n}+\dots+\sigma_m^{n}-mH(n) \right|\geq\alpha\right) &\leq& \frac{1}{\alpha^2}\sum_{m=1}^{[\frac{ 1}{H(n)}]+1} Var(\sigma_m^{n}) < \infty
\end{eqnarray*}

\hfill $\square$

\section{Rate of convergence}\label{rates}

In this section we will prove the rate of convergence of the processes $x_n(t)$ in a particular case. 
We consider $U_m \sim U(0,1)$ for all $m \ge 1$ and $\beta(n) = n^{-k}$  with $k>1$. Then
\begin{eqnarray*}
&&	G(n) = \frac{2^\frac12}{3^\frac12 n^{\frac{k}{2}}},\qquad
	H(n)  =  \frac{1}{2 n^k},
\end{eqnarray*}
and
$$ U_m^n \sim U(0,n^{-k}),\qquad \gamma_m^n \sim U(0,n^{-k}), \qquad \xi_m^n \sim U(0,\frac{3^\frac12}{ 2^\frac12 }n^{-\frac{k}{2}}).$$

\bigskip

\begin{teo}\label{thm_rate}
	Assume $U_m \sim U(0,1)$ for all $m \ge 1$ and $\beta(n) = n^{-k}$  with $k>1$.
Then, 	for all $q>0$,
	$$ P\left( \max_{0\leq t\leq1} |x_n(t)-x(t)| > \alpha\,n^{-\frac{k}{4}}\left(\log{n}\right)^\frac32 \right) = o(n^{-q})  \qquad \mbox{as} \quad n\rightarrow \infty $$
	where $\alpha$ is a  positive constant depending on $q$.
\end{teo}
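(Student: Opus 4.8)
The strategy is to re-run the Skorokhod-embedding construction from the proof of Theorem~\ref{resultat}, but now keeping quantitative control of the two error terms, exactly as in \cite{art G-G}. Recall that with $U_m\sim U(0,1)$ and $\beta(n)=n^{-k}$ we have $H(n)=\frac{1}{2n^k}$, so the number of renewals we must control on $[0,1]$ is of order $n^k$; the increments $\gamma_m^n$ are $U(0,n^{-k})$ with mean $H(n)$ and variance of order $n^{-2k}$, and the Skorokhod times $\sigma_m^n$ have $\E(\sigma_m^n)=H(n)$ and $\E((\sigma_m^n)^2)\le L_2\,n^{-2k}\,\frac{\E(U_1^4)\E(U_1)^2}{\E(U_1^2)^2}$, which is again of order $n^{-2k}$. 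Writing $M_n:=[1/H(n)]+1\sim 2n^k$, the quantity to be bounded is
\begin{equation*}
\max_{0\le t\le1}|x_n(t)-x(t)|\le \max_{1\le m\le M_n}\Big|\sum_{j=1}^m\gamma_j^n-mH(n)\Big|
+\ \omega_x\!\Big(\max_{1\le m\le M_n}\Big|\sum_{j=1}^m\sigma_j^n-mH(n)\Big|+\text{(a small overshoot term)}\Big),
\end{equation*}
where $\omega_x$ is the modulus of continuity of the Brownian motion $x$ on a slightly enlarged interval. So the proof splits into three pieces: (i) a tail bound for the random-walk fluctuation $\max_m|\sum\gamma_j^n-mH(n)|$; (ii) the same for $\max_m|\sum\sigma_j^n-mH(n)|$; and (iii) control of $\omega_x$ of a window of the size produced by (ii).

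\textbf{Steps.} First, for (i): the centered variables $\gamma_j^n-H(n)$ are i.i.d., bounded by $n^{-k}$ in absolute value, with variance $\asymp n^{-2k}$. A Bernstein/Hoeffding inequality for the partial sums, combined with a maximal inequality (Etemadi or a direct Lévy-type argument, or just Kolmogorov's inequality applied dyadically), gives
$P\big(\max_{1\le m\le M_n}|\sum_{j=1}^m(\gamma_j^n-H(n))|>\delta\big)\le 2\exp\!\big(-c\,\delta^2/(M_n n^{-2k}+n^{-k}\delta)\big)$. Since $M_n n^{-2k}\asymp n^{-k}$, choosing $\delta=\alpha_1 n^{-k/2}(\log n)^{1/2}$ makes the exponent $\asymp -c\alpha_1^2\log n$, i.e.\ the probability is $O(n^{-q})$ once $\alpha_1$ is large enough depending on $q$. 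For (ii): the $\sigma_m^n$ are independent, nonnegative, with mean $H(n)$ and second moment $\asymp n^{-2k}$, but unlike the $\gamma$'s they are not bounded. One handles this with a truncation at level $n^{-k}(\log n)^{a}$ for suitable $a$ (the fourth-moment hypothesis on $U_1$, hence on $\xi_m^n$ and on $\sigma_m^n$ via Skorokhod's $L_2$-bound, kills the truncated tails summably), then applies the same Bernstein-type maximal bound to the truncated centered sum. This again yields a deviation of order $n^{-k/2}(\log n)^{1/2}$ with probability $O(n^{-q})$; the overshoot/last-jump term is of the same or smaller order and is absorbed. Finally (iii): on the event that the argument of $\omega_x$ is at most $r_n:=\alpha_2 n^{-k/2}(\log n)^{1/2}$, use the standard Brownian modulus-of-continuity estimate $P\big(\omega_x(r_n)>\lambda\sqrt{r_n\log(1/r_n)}\big)\le C r_n^{-1}e^{-c\lambda^2}$ (Lévy's theorem / a chaining bound). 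With $r_n\asymp n^{-k/2}(\log n)^{1/2}$ we get $\sqrt{r_n\log(1/r_n)}\asymp n^{-k/4}(\log n)^{3/4}\sqrt{\log n}=n^{-k/4}(\log n)^{1/2}\cdot(\log n)^{1/4}$; taking $\lambda\asymp(\log n)^{1/2}$ turns the bound into $O(n^{-q})$ and produces an $\omega_x$-bound of order $n^{-k/4}(\log n)^{1/2}\cdot(\log n)^{1/2}\cdot$(extra $(\log n)^{1/4}$)\,$\le \alpha\, n^{-k/4}(\log n)^{3/2}$, which is the dominant term and matches the statement. Collecting (i)--(iii) and choosing the constants $\alpha_1,\alpha_2,\lambda,\alpha$ in terms of $q$, a union bound over the (finitely many, $O(\log n)$) dyadic scales and the three events finishes the proof.

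\textbf{Main obstacle.} The delicate point is (ii): the Skorokhod stopping times $\sigma_m^n$ are only controlled through their first two moments (Theorem of \cite{sko} gives $\E((\sigma_m^n)^2)\le L_2\E((\xi_m^n)^4)$), so getting a \emph{sub-Gaussian}-type maximal bound for their partial sums requires the truncation argument above, and one must check that the contribution of the discarded (untruncated) parts is $o(n^{-q})$ using only the fourth-moment input — this is where the hypothesis $\E(U_1^4)<\infty$ is essential and where the exponent bookkeeping ($n^{-k/4}$ versus $n^{-k/2}$, and the powers of $\log n$) must be done carefully. A secondary but routine point is making the reduction to finitely many $m$-scales rigorous (the process $x_n$ is piecewise linear of slope $\pm|G(n)|^{-1}$, so its values between renewal points are squeezed between adjacent renewal values, contributing at most an extra $n^{-k}\cdot|G(n)|^{-1}\asymp n^{-k/2}$, negligible against $n^{-k/4}(\log n)^{3/2}$).
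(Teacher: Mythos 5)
Your overall decomposition (random-walk fluctuations of the $\gamma$'s, of the Skorokhod times $\sigma$'s, plus a Brownian modulus-of-continuity term on a window of size $\delta_n$, plus an overshoot term) is exactly the paper's $J_1^n+J_2^n+J_3^n+J_4^n$, and your steps (i) and (iii) are sound: the $\gamma_j^n$ are bounded by $n^{-k}$, so a Bernstein/Hoeffding maximal bound does give $\delta_n\asymp n^{-k/2}(\log n)^{1/2}$ with probability $o(n^{-q})$, and the Gaussian tail for $\max_{|s|\le\delta_n}|x(s)|$ then forces $a_n\asymp n^{-k/4}(\log n)^{3/2}$, matching the statement (the paper gets $\delta_n\asymp n^{-k/2}(\log n)^{2}$ by a cruder route, but the exponent of $\log n$ in $\delta_n$ is absorbed into the final $(\log n)^{3/2}$ either way).

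The genuine gap is your step (ii). You propose to handle the unbounded $\sigma_m^n$ by truncating at $n^{-k}(\log n)^a$ and claim the discarded tails are controlled ``using only the fourth-moment input,'' i.e.\ Skorokhod's bound $\E((\sigma_m^n)^2)\le L_2\,\E((\xi_m^n)^4)\asymp n^{-2k}$. This cannot work: Chebyshev gives $P(\sigma_m^n>n^{-k}(\log n)^a)\le (\log n)^{-2a}$, and the union bound over the $\asymp n^k$ indices $m$ yields $n^k(\log n)^{-2a}$, which diverges — let alone being $o(n^{-q})$ for every $q>0$. No fixed-order moment of $\sigma_m^n$ can produce superpolynomial decay here; even choosing the truncation level and the moment order as functions of $q$, the bias $\sum_m\E(\sigma_m^n\1_{\{\sigma_m^n>t_n\}})$ decays only polylogarithmically and swamps $\delta_n\asymp n^{-k/2}$. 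What the paper actually uses (following Gorostiza--Griego) is the full strength of Skorokhod's theorem, namely $\E((\sigma_m^n)^{2p})\le 2\,(2p)!\,\E((\xi_m^n)^{4p})$ for \emph{all} $p\ge1$, applied in a $2p$-th moment Doob/Markov inequality with $p=[\log n]$; it is precisely the growing moment order, together with the multinomial counting lemma for $\sum_{|u|=2p,\,u_m\ne1}\binom{2p}{u}$, that converts polynomial-in-$n$ union bounds into $o(n^{-q})$ for every $q$. Your proof needs this (or an equivalent exponential-moment bound for $\sigma_m^n$, which is not available off the shelf from the embedding) to close step (ii); as written, that step fails.
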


Since the proof follows the structure of part b) of Theorem 1 in \cite{art G-G}, we give a sketch of the proof.

\bigskip

{\it Proof of Theorem \ref{thm_rate}.}
Recall  that $\gamma_0^{n}\equiv\sigma_0^{n}\equiv0$ and define
    $$ \Gamma_m^{n}=\sum_{j=0}^m \gamma_j^{n} \qquad \mbox{and} \qquad \Lambda_m^{n}=\sum_{j=0}^m \sigma_j^{n}. $$
Notice that $x_n(\Gamma_m^n)=x(\Lambda_m^n)$. Set
    $$ J^n \equiv \max_{0\leq m\leq \frac{1}{H(n)}} \,\max_{0\leq r\leq\gamma_{m+1}^{n}} \big|x_n(\Gamma_m^{n}+r)-x(\Gamma_m^{n}+r)\big|. $$
Since $ x_{n}$ is piecewise linear and using  the definition of $\gamma_m^{n}$, notice that
    \begin{eqnarray*}
      x_{n}(\Gamma_m^{n}+r) 
      &=& x(\Lambda_m^{n})+\frac{x(\Lambda_{m+1}^{n})-x(\Lambda_m^{n})}{\gamma_{m+1}^{n}}\,r \\
      &=& x(\Lambda_m^{n})+\frac{1}{G(n)} \times \sgn\Big(x(\Lambda_{m+1}^{n})-x(\Lambda_m^{n})\Big) r.
    \end{eqnarray*}

  Thus,
    \begin{eqnarray*}
      J^n  
      &\leq& \max_{0\leq m\leq \frac{1}{H(n)}} \Big|x(\Lambda_m^{n})-x\Big(m H(n)\Big)\Big| + \max_{0\leq m\leq \frac{1}{H(n)}} \Big|x(\Gamma_m^{n})-x\Big(m H(n)\Big)\Big| \\
      &&  + \max_{0\leq m\leq \frac{1}{H(n)}} \,\max_{0\leq r\leq\gamma_{m+1}^{n}} \big|x(\Gamma_m^{n})-x(\Gamma_m^{n}+r)\big| + \max_{1\leq m\leq \frac{1}{H(n)}+1} \frac{1}{G(n)} \gamma_m^{n} \\
      &:=& J_1^n+J_2^n+J_3^n+J_4^n,
    \end{eqnarray*}
and for any $a_n>0$,
$$ P(J^n>a_n) \leq \sum_{j=1}^4P\Big(J_j^n>\frac{a_n}{4}\Big) := I_1^n+I_2^n+I_3^n+I_4^n.$$

We will study the four terms separately.

\bigskip

{\it 1. Study of the term $I_4^n.$}
 Since   $\gamma_m^{n}$'s are independent  variables with law  $\sim U(0,n^{-k})$, 
    \begin{eqnarray*}
     I_4^n &
      \leq& P\left( \max_{1\leq m\leq \frac{1}{H(n)}+1} \gamma_m^{n}>\frac{a_n}{2^\frac32 3^\frac12 n^{\frac{k}{2}}} \right) 
      = 1-P\left( \gamma_m^{n}\leq\frac{a_n}{2^\frac32 3^\frac12 n^{\frac{k}{2}}} \right)^{[\frac{1}{H(n)}]+1}
     = 0,
      \end{eqnarray*}
when $n$ is big enough for $a_n$ of the type $\alpha\,n^{-\frac{k}{4}}\left(\log n\right)^\beta$, with $\alpha$ and $\beta$  positive arbitrary fixed constants.

\bigskip

{\it 2. Study of the term $I_1^n.$} Let $\delta_n>0$.We can write
    \begin{eqnarray*}
      I_1^n
      &\leq& P\left( \max_{0\leq m\leq \frac{1}{H(n)}} \, \max_{|s|\leq\delta_n} \Big|x\Big(m H(n)+s\Big)-x\Big(m H(n)\Big)\Big|>\frac{a_n}{4} \right) \\
      &&+ P\left( \max_{1\leq m \leq \frac{1}{H(n)} } \Big|\Lambda_m^{n}-m H(n)\Big|>\delta_n \right)\\
      &=& I_{11}^n+ I_{12}^n.
    \end{eqnarray*}

\medskip

{\it 2.1. Study of the term $I_{12}^n.$}
 Notice that
    \begin{eqnarray}
      I_{12}^n &=& P\left( \max_{1\leq m\leq \frac{1}{H(n)}} \bigg|\sum_{j=1}^m\Big(\frac{1}{H(n)}\sigma_j^{n}-1\Big)\bigg|>\frac{\delta_n}{H(n)} \right) \nonumber \\
      &\leq& \left(\frac{H(n)}{\delta_n}\right)^{2p} \, \E\left[\left(\sum_{m=1}^{  [\frac{1}{H(n)}] }\bigg(\frac{1}{H(n)}\sigma_m^{n}-1\bigg)\right)^{2p}\right],
      \label{eq4}
    \end{eqnarray}
  for any $p\geq1$, by Doob's martingale inequality.

Set $Y_m:=\frac{1}{H(n)}\sigma_m^{n}-1$. Using H\"older's inequality, we obtain
    \begin{eqnarray}
      \E\left[\Bigg(\sum_{m=1}^{[\frac{1}{H(n)}]}Y_m\Bigg)^{2p}\right] &=& \sum_{\begin{subarray}{c} |u|=2p\\ u_m\neq1\,\forall m \end{subarray}}{2p\choose u} \E\Big(Y_1^{u_1}\cdots Y_{[\frac{1}{H(n)}]}^{u_{[\frac{1}{H(n)}]}}\Big) \label{eq5} \\
      &\leq& \sum_{\begin{subarray}{c} |u|=2p\\ u_m\neq1\,\forall m \end{subarray}}{2p\choose u} \big[\E\big(Y_1^{2p}\big)\big]^{u_1/2p}\cdots\big[\E\big(Y_{[\frac{1}{H(n)}]}^{2p}\big)\big]^{u_{[\frac{1}{H(n)}]}/2p}. \nonumber
    \end{eqnarray}
  where $u=(u_1,\dots,u_{[\frac{1}{H(n)}]})$ with $|u|=u_1+\cdots+u_[\frac{1}{H(n)]}$ and
    $$ {2p\choose u} = \frac{(2p)!}{u_1!\cdots u_{[\frac{1}{H(n)}]}!}. $$
  Notice that in the first equality we have used that if $u_m=1$ for any $m$, then $\E\big(Y_1^{u_1}\cdots Y_{[\frac{1}{H(n)}]}^{u_{[\frac{1}{H(n)}]}}\big)=0$. 
On the other hand,  by the estimates given by Skorohod's theorem (see \cite{art G-G}), we have
    \begin{eqnarray*}\E[(\sigma_m^n)^{2p}]
      \leq 2(2p)! \E\Big[\,(k_i\xi_m^{n})^{4p}\Big]
      \leq 2(2p)!\,\frac{1}{(4p+1)}3^{2p}\left(\frac{1}{2n^{k}}\right)^{2p}.
    \end{eqnarray*}
So, using the inequality $|a+b|^{2p}\leq2^{2p}(|a|^{2p}+|b|^{2p})$, we obtain
\begin{equation}
\E\Big(Y_m^{2p}\Big) \leq (2p)!\,6^{2p}.\label{eq6}
    \end{equation}
 Finally  from a lemma in page 298 in \cite{art G-G} (see also Lemma 5-1 in \cite{B-B-R}) we obtain that, for \begin{equation} p\leq1+\frac{\log2}{\log\big[1+(2n^{-k}-n^{-2k})^\frac12\big]},\label{con7}\end{equation}
 we get that
    \begin{eqnarray}\label{eq7}
      \sum_{\begin{subarray}{c} |u|=2p\\ u_i\neq1\,\forall i \end{subarray}}{2p\choose u} \leq 2^{2p}(2p)!\left(2n^k\right)^p.
    \end{eqnarray}

  Therefore, for $p$ as above,  putting together (\ref{eq4}), (\ref{eq5}), (\ref{eq6}) and (\ref{eq7}) and  applying Stirling formula, $k!=\sqrt{2\pi}\,k^{k+\frac12}e^{-k}e^{\frac{a}{12k}}$, with $0<a<1$, we obtain
    \begin{eqnarray*}
     I_{12}^n &\leq&\,(\delta_n)^{-2p} \,n^{-kp} \,6^{2p} \, 2^p \Big[\sqrt{2\pi}(2p)^{2p+\frac12}e^{-2p}e^{\frac{a}{24p}}\Big]^2 \\
      &\leq& K_1^p \,(\delta_n)^{-2p} \,n^{-kp} \,p^{4p+1}
    \end{eqnarray*}
  where $K_1$ is a constant.

  Let us impose now $ K_1^p \,(\delta_n)^{-2p} \,n^{-kp} \,p^{4p+1}=n^{-2q}$ and $p=\big[\log n\big]$. Observe that this $p$ fulfills condition on $p$ of inequality (\ref{con7}). We get
    \begin{eqnarray}\label{eq9}
      \delta_n = K_2 \,n^{q/[\log{n}]-\frac{k}{2}}
      \,\left[\log{n}\right]^{2+1/(2[\log{n}])},
    \end{eqnarray}
  where $K_2=\sqrt{K_1}$ is a constant. Clearly, with this $\delta_n$,  it follows that $ I_{12}^n=o(n^{-q})$.

 \medskip

{\it 2.4. Study of the term $I_{11}^n.$} As in  Theorem 1 in \cite{art G-G}, for  big $n$ and using a Doob's martingale inequality for Brownian motion we get
    \begin{eqnarray*}
      I_{11}^n &\leq&  \frac{1}{H(n)} P\left(\max_{|s|\leq\delta_n} \big|x(s)\big|>\frac{a_n}{4}\right)
      \leq 8n^k  P\left(\max_{0\leq s\leq\delta_n} x(s)>\frac{a_n}{4}\right) \nonumber\\
      &\leq& 8n^k \exp\left(-\Big(\frac{a_n}{4}\Big)^2\frac{1}{2\delta_n}\right).
    \end{eqnarray*}

Condition
$8 n^k\exp\big(-(a_n)^2/(32\delta_n)\big) = 8 n^{-2q}$ yields that
$$
      a_n =  K_3 \,n^{-k/4} \,n^{q/2[\log{n}]} \,\big(\log{n}\big)^{1+1/4[\log{n}]}
      \,\big(\log{n}\big)^{1/2},
$$
  where $K_3$ is a constant depending on $q$. Notice that 
  $$a_n = \alpha \, n^{-k/4} \,\big(\log{n}\big)^{3/2}, $$
  for big $n$, where $\alpha$ is a constant that depends on $q$, satisfies such a condition. Thus,
with $\delta_n$ as in (\ref{eq9}), it follows that $ I_{11}^n=o(n^{-q})$.

\bigskip

{\it 3. Study of the term $I_{2}^n$.}
For our $\delta_n>0$, we have
    \begin{eqnarray*}
    I_{2}^n
      &\leq& P\left( \max_{0\leq m\leq\frac{1}{H(n)}} \, \max_{|s|\leq\delta_n} \Big|x\Big(m H(n)+s\Big)-x\Big(m H(n)\Big)\Big|>\frac{a_n}{4} \right) \\
      && + P\left( \max_{0\leq m\leq\frac{1}{H(n)}} \Big|\Gamma_m^{n}-m H(n)\Big|>\delta_n \right)
      = I_{21}^n + I_{22}^n.
    \end{eqnarray*}

  On one hand, observe that $ I_{21}^n = I_{11}^n$, thus $ I_{21}^n=o(n^{-q})$.

On the other hand,
applying  Doob's martingale inequality
    \begin{eqnarray*}
       I_{22}^n
      &=& P\left( \max_{0\leq m\leq\frac{1}{H(n)}} \Bigg|\sum_{j=0}^m\bigg(\frac{1}{H(n)}\gamma_j^{n}-1\bigg)\Bigg|>\frac{\delta_n}{H(n)} \right) \\
      &\leq& \left(\frac{H(n)^2}{\delta_n}\right)^{2p} \,
      \E\left[\left(\sum_{m=1}^{[\frac{1}{H(n)}]}\bigg(\frac{1}{H(n)}\gamma_m^{n}-1\bigg)\right)^{2p}\right].
    \end{eqnarray*}
Set $V_m:=\frac{1}{H(n)}\gamma_m^{n}-1$. Notice that $V_m$'s are independent and centered random variables with
    \begin{eqnarray*}
      \E\Big(V_m^{2p}\Big) &\leq& 2^{2p}\left( \bigg(2 n^k\bigg)^{2p}\E\big[(\gamma_m^{n})^{2p}\big]+1 \right) 
      \leq 2\cdot4^{2p}\,\frac{1}{(2p+1)}.
    \end{eqnarray*}
Then using an inequality of the type of (\ref{eq5}) and following the same arguments that in the study
of $I_{12}^n$, we get that $ I_{22}^n=o(n^{-q})$.
\bigskip

{\it 4. Study of the term $I_{3}^n$.}
 For $\delta_n>0$  defined in (\ref{eq9}) and $a_n$  of the type $\alpha\,n^{-\frac{k}{4}}\left(\log n \right)^\frac32$
    \begin{eqnarray*}
     I_{3}^n&\leq& P\left( \max_{0\leq m\leq\frac{1}{H(n)}} \,\max_{|r|\leq\delta_n} \big|x(\Gamma_m^{n})-x(\Gamma_m^{n}+r)\big|>\frac{a_n}{4} \right) \\
      && + P\left( \max_{1\leq m\leq\frac{1}{H(n)}+1} \gamma_m^{n}>\delta_n \right)
      := I_{31}^n + I_{32}^n.
    \end{eqnarray*}

 On one hand, $ I_{31}^n=o(n^{-q})$ is proved in the same way as  $ I_{11}^n$.
On the other hand, for $n$ big enough, 
     $ I_{32}^n=0$,  similarly as we have proved for $ I_{4}^n$.

\bigskip
We have checked now that all the terms in our decomposition are of order $n^{-q}$.  The proof of Theorem \ref{thm_rate} can be completed  following the same computations that in \cite{art G-G} (see also Theorem 3.2 in \cite{B-B-R})

\hfill$\square$


Xavier Bardina

{\rm Departament de Matem\`atiques, Facultat de Ci\`encies,
	
	Edifici C, Universitat Aut\`onoma de Barcelona, 08193 Bellaterra}.

{\tt Xavier.Bardina@uab.cat}
\newline
\bigskip

 Carles Rovira

{\rm Facultat de Matem\`atiques,
Universitat de Barcelona, 

Gran Via 585, 08007 Barcelona}.

 {\tt carles.rovira@ub.edu}

\end{document}